\newtheorem{theorem}{Theorem}[section]
\theoremstyle{plain}
\newtheorem{conjecture}[theorem]{Conjecture}
\newtheorem{lemma}[theorem]{Lemma}
\newtheorem{proposition}[theorem]{Proposition}
\newtheorem{corollary}[theorem]{Corollary}
\newtheorem{problem}[theorem]{Problem}
\newtheorem{observation}[theorem]{Observation}
\theoremstyle{definition}
\newtheorem{definition}[theorem]{Definition}
\newtheorem{example}[theorem]{Example}
\begin{document}

\author{Jonas H\"agglund, Klas Markstr\"om }
\title{Shortest cycle covers and cycle double covers with large 2-regular subgraphs}

\maketitle

\begin{abstract}
	In this paper we show that many snarks have shortest cycle covers of length $\frac{4}{3}m+c$ for a constant $c$, where $m$ is the number of edges in the graph, in 
	agreement with the conjecture that all snarks have shortest cycle covers of length $\frac{4}{3}m+o(m)$.
	
	In particular we prove that graphs with perfect matching index at most 4 have cycle covers of length $\frac{4}{3}m$ and satisfy the $(1,2)$-covering conjecture of Zhang, 
	and 	that graphs with large circumference have cycle covers of length close to $\frac{4}{3}m$. We also prove some results for graphs with low oddness and discuss the 
	connection with Jaeger's Petersen colouring conjecture.
\end{abstract}

\section{Introduction}
A cycle cover of a graph $G$  is a collection of cycles $\mathcal{F}$  such that every edge edge of $G$ belongs to at least one cycle of $\mathcal{F}$.  
Cycle covers are well studied objects, see \cite{CQ,J2} for surveys, and in particular the length, i.e the sum of the lengths of the cycles in the cover, has received much attention.  We let $scc(G)$ denote the length of the shortest cycle cover of $G$.  In \cite{AT} Alon and  Tarsi made the following conjecture
\begin{conjecture}\cite{AT}
	If $G$ is a cubic 2-edge connect graph then $scc(G)\leq \frac{7}{5}m$, where m is the number of edges in $G$
\end{conjecture}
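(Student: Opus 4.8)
The plan is to reduce the conjecture to its genuinely hard core and then attack that core through $2$-factors and flow theory. First I would dispose of the $3$-edge-colourable case: if $G$ is cubic and $3$-edge-colourable, its colour classes are perfect matchings $M_1,M_2,M_3$, and the union of any two is a $2$-factor covering exactly $\tfrac{2}{3}m$ edges. Taking the two $2$-factors $M_1\cup M_2$ and $M_1\cup M_3$ then covers every edge, with total length $\tfrac{4}{3}m\le\tfrac{7}{5}m$. Thus it suffices to treat cubic $2$-edge-connected graphs that are not $3$-edge-colourable, i.e.\ snark-like graphs, for which no such clean decomposition exists.

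For such a $G$ I would fix a $2$-factor $F$ (which exists by Petersen's theorem) with complementary perfect matching $M$. The cycles of $F$ already cover $\tfrac{2}{3}m$ edges at no extra cost, so the length of the resulting cover is $m$ plus the number of edges re-used when covering the $\tfrac{1}{3}m$ matching edges of $M$; the whole problem is to bound this excess by $\tfrac{2}{5}m$. The key structural parameter is the oddness of $F$, the number of its odd cycles, and one would choose $F$ to minimise it. Even cycles of $F$ split into two paths that combine with incident matching edges cheaply, so the excess is concentrated on the odd cycles---exactly as in the Petersen graph, where every $2$-factor is a pair of $5$-cycles and the forced excess is $6=\tfrac{2}{5}\cdot 15$.

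The hard part will be controlling this excess uniformly. Snarks have no nowhere-zero $4$-flow, so their edges cannot be written as the union of two even subgraphs; the strongest unconditional tool is Seymour's nowhere-zero $6$-flow, and a $6$-flow only delivers the weaker bound $\tfrac{5}{3}m$. Pushing from $\tfrac{5}{3}$ down to $\tfrac{7}{5}$ seems to require genuinely new input, and the most natural route is via Jaeger's Petersen colouring conjecture: a Petersen colouring of $G$ maps the three edges at each vertex to the three edges at a vertex of the Petersen graph $P$, and pulling back a shortest cycle cover of $P$ yields a cover of $G$ of length $\tfrac{7}{5}m$. The real obstruction, therefore, is establishing such a colouring (or an equally strong nowhere-zero $5$-flow structure), and this is precisely the point at which the argument meets the limits of what is currently known.
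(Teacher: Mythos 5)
The statement you were asked about is not a theorem of the paper at all: it is the Alon--Tarsi conjecture, which the paper explicitly states remains open, with the best known unconditional bound being $scc(G)\leq \frac{34}{21}m$ (Kaiser, Kr\'al, Lidick\'y, Nejedl\'y, \v{S}\'amal). So there is no ``paper's own proof'' to match, and your proposal, as you yourself concede in its final sentence, is not a proof either. Your first step is fine and standard: for $3$-edge-colourable graphs the two $2$-factors formed from pairs of colour classes give a cover of length $\frac{4}{3}m$. Your last step is also a correct \emph{conditional} statement, and in fact it appears in the paper's closing section: if $G$ admits a Petersen colouring, pulling back a shortest cycle cover of the Petersen graph (which has $scc(P)=21=\frac{7}{5}\cdot 15$) gives $scc(G)\leq\frac{7}{5}m$, with a gain of $1$ when the colouring is unbalanced. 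But Jaeger's Petersen colouring conjecture is itself open, so conditioning on it cannot close the argument; you have reduced one open conjecture to another, strictly stronger, open conjecture.

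The genuine gap, concretely, is your middle step: ``controlling this excess uniformly.'' Fixing a $2$-factor $F$ of minimum oddness and claiming the excess over $m$ is concentrated on the odd cycles, bounded by $\frac{2}{5}m$, is exactly the part for which no argument is known. Even cycles of $F$ do not automatically combine with the matching edges ``cheaply'': covering a matching edge forces reuse of cycle edges whose total cannot currently be bounded below $\frac{34}{21}m$ overall, and there are no known techniques that exploit minimum oddness to get down to $\frac{7}{5}m$ in general. (For bounded oddness the paper only obtains bounds of the form $\frac{4}{3}m+c$ under \emph{additional} hypotheses, e.g.\ a $2$-factor with exactly two components, or $\tau(G)\leq 4$, which gives $scc(G)=\frac{4}{3}m$ exactly; none of these covers all $2$-edge-connected cubic graphs.) Your diagnosis of where the difficulty lives is accurate and your conditional reduction is correct, but the proposal should be presented as a reduction and a survey of obstructions, not as a proof of the conjecture.
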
\label{C:AT}
If $G$ is  3-edge-colorable it is easy to see that the conjecture is true, and in fact $scc(G)=\frac{4}{3}m$, since we can use the cycle  cover given by taking two pairs of edge colours.  This particular covering will also be a $(1,2)$-covering, ie. every edge lies in either 1 or 2 cycles, and it can  be extended to a cycle double covering, where every edge belongs to exactly 2 cycles.

In \cite{Ko} Kostochka proved that if Conjecture \ref{C:AT} is true then it also holds with the added condition that the covering must be a $(1,2)$-covering, and Zhang \cite{ZCQ} conjectured that for any 3-edge-connected cubic graph there is a shortest cycle cover which is a $(1,2)$-cover.   Conjecture \ref{C:AT} remains open, it is known from \cite{AT} that for both 2-edge and 3-edge-connected graphs the constant $\frac{7}{5}$ would be optimal, and the best current bound is $scc(G)\leq \frac{34}{21}m$, \cite{KKLNR}. 

In \cite{BGHM} all snarks (cyclically 4-edge connected, not 3 edge-colourable  cubic graphs) on $n\leq 36$  vertices were generated and, among other things, the value of $scc(G)$ was computed. It turns out that in this collection of graphs  only two have $scc(G)=\frac{4}{3}m+1$, the examples are the Petersen graph and a particular snark with 34 vertices, while all other have $scc(G)=\frac{4}{3}m$, and the authors made the following conjecture.
\begin{conjecture}\cite{BGHM}\label{C:S}
	If $G$ is a snark then 
	$scc(G)\leq \frac{4}{3}m+o(m)$
\end{conjecture}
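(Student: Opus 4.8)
The plan is to reduce the problem to controlling the odd cycles of a well-chosen $2$-factor. Since a snark is cubic and bridgeless, Petersen's theorem supplies a perfect matching $M$, so that $F=E(G)\setminus M$ is a $2$-factor. If every cycle of $F$ is even, then colouring the edges of $F$ alternately with two colours $a,b$ around each cycle and giving $M$ the third colour $c$ produces three even subgraphs $a\cup b=F$, $a\cup c$ and $b\cup c$; each decomposes into cycles, and the two subgraphs $F$ and $a\cup c$ together form a $(1,2)$-cover of length $\tfrac23 m+\tfrac23 m=\tfrac43 m$ (as $|F|=n=\tfrac23 m$). This is exactly the $3$-edge-colourable situation, so the whole difficulty is concentrated in the odd cycles of $F$: around an odd cycle the alternating colouring must fail at one vertex, destroying the even-subgraph property and creating a parity defect.

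Second, I would bound the cost of repairing these defects. In any $2$-factor of a bridgeless cubic graph the number of odd cycles is even, since the cycle lengths sum to the even number $n$; pairing the odd cycles, one restores the parity of the relevant even subgraphs by adding a parity-correcting subgraph (a $T$-join on the set of defect vertices) routed through edges of $M$ and of the even cycles. The extra length beyond $\tfrac43 m$ is at most the length of this correction, so everything comes down to the estimate
\[
scc(G)\le \tfrac{4}{3}m + (\text{total length of the parity corrections}),
\]
and for the $o(m)$ bound it suffices to exhibit, for each snark, a $2$-factor together with a correction of total length $o(m)$.

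Third, I would establish this in the regimes flagged in the abstract, which together cover ``many'' snarks. When the oddness is $o(m)$ one takes $F$ attaining it, so only $o(m)$ defects need be paired and corrected and the overhead is $o(m)$. Large circumference reduces to this case: if the longest cycle misses only $k=o(n)$ vertices, one builds a $2$-factor from that cycle together with cycles covering the remaining $k$ vertices, whence the oddness is $O(k)=o(m)$. Finally, if the perfect-matching index is at most $4$, four perfect matchings $M_1,\dots,M_4$ cover $E(G)$, and a counting argument at each vertex (three incident edges, four choices, each edge chosen at least once) shows no edge lies in more than two of them; symmetric differences such as $M_1\triangle M_2$, $M_3\triangle M_4$ and $M_1\triangle M_3$ are then three even subgraphs covering every edge, and an optimised choice makes their total length exactly $\tfrac43 m$, with no error term at all.

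The hard part will be the remaining snarks: those with oddness linear in $m$, small circumference, and perfect-matching index $5$ all at once. For such graphs no $2$-factor is known to have a sublinear number of odd cycles, there is no dominating cycle to exploit, and the cheapest family of parity corrections may a priori have length $\Theta(m)$; forcing the correction cost down to $o(m)$ in this unstructured regime is precisely the open core of the conjecture, and is why even the best known general bound $\tfrac{34}{21}m$ remains far from $\tfrac43 m$. I would not expect the three structural hypotheses above to be removable without a genuinely new idea controlling the global distribution of odd cycles across all $2$-factors of $G$.
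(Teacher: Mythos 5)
The statement you were asked about is a \emph{conjecture}: the paper does not prove it, and neither do you. Your proposal is candid about this (``the open core of the conjecture''), so the real question is whether your sketches of the special cases match what the paper actually establishes --- and they do not. First, your central mechanism, ``oddness $o(m)$ implies overhead $o(m)$ via a $T$-join parity correction,'' is unsupported: you give no bound on the length of the correction, and pairing odd cycles by paths can a priori cost $\Theta(n)$ per pair, so $o(m)$ many defects do not yield an $o(m)$ overhead. The paper itself treats even the much weaker question of whether $scc(G)\leq \frac{4}{3}m+h(o(G))$ for \emph{some} function $h$ as an open Problem, and its oddness-2 theorem requires cyclic 4-edge-connectivity together with structural hypotheses on the 2-factor, obtaining $+2$ (or $+1$) via a careful recolouring of two specific cycles --- not a generic parity repair. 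Second, your circumference argument fails at its first step: a longest cycle need not extend to a 2-factor, so ``build a 2-factor from that cycle together with cycles covering the remaining $k$ vertices'' is not available. The paper instead deletes the chords of the longest cycle $C$ to get a small cubic graph $G'$ on at most $4k$ vertices, invokes the \emph{strong} cycle double cover conjecture on $G'$ (known only for $n\leq 36$, hence only $k\leq 9$), glues the resulting CDC with a 3-edge-colouring of the Hamiltonian piece, and applies Lemma~\ref{lem1} to a 2-regular subgraph of size at least $n-4k$; the result is conditional and gives $f(k)\leq 4k$, not your unconditional $o(m)$ reduction.

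Third, in the $\tau(G)\leq 4$ case your counting argument (no edge in more than two of $M_1,\dots,M_4$) is correct, but the claim that ``an optimised choice'' of three symmetric differences such as $M_1\triangle M_2$, $M_3\triangle M_4$, $M_1\triangle M_3$ has total length exactly $\frac{4}{3}m$ is unverified and false in general: an edge in $M_1\cap M_3$ receives weight $2$ in that triple while an edge only in $M_2$ receives weight $1$, so the total depends on how the doubly covered edges are distributed. The paper's actual device is sharper: the doubly covered edges form a perfect matching $M$, the family $\{M\triangle M_1,\dots,M\triangle M_4\}$ together with $E(G)\setminus M$ is a 5-CDC in which one class is the 2-factor $E(G)\setminus M$, and Lemma~\ref{lem1} then gives a $(1,2)$-cover of length $2m-n=\frac{4}{3}m$ exactly (this also yields the $(1,2)$-cover property toward Zhang's conjecture, which your route does not). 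Finally, even granting all three regimes, they do not exhaust the snarks --- exactly as you concede --- so your proposal, like the paper, delivers only partial results, and the weakest links in your version are precisely the two unproved bridges (oddness $\Rightarrow$ short cover, circumference $\Rightarrow$ small oddness) that the paper avoids by working with cycle double covers containing large 2-regular subgraphs.
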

For the class of cyclically 4-edge-connected graph this conjecture gives a significant strengthening of Conjecture \ref{C:AT}. In \cite{EM} a snark with $scc(G)=\frac{4}{3}m+2$ was constructed..

Apart from the computational results of \cite{BGHM}  we also know that Conjecture \ref{C:S} holds for snark families where the girth grows with $n$, since Jackson \cite{J1} proved that $scc(G)\leq (\frac{4}{3}+\frac{2}{g})m$ for 2-edge-connected cubic graphs of girth $g$ and $g\geq 8$. Additional conditions for having short cycle covers with different structural properties were given in \cite{St}.

In this paper we will show that Conjecture \ref{C:S} is true for several classical families of snarks  by relating short cycle covers to the perfect matching index of a cubic graph, see Section \ref{pm}, where we also show that graphs with low perfect matching index satisfy Zhang's conjecture.  We will also prove that for graphs with long cycles, of length at least $n-k$, where $k\leq 9$, we have $scc(G)\leq \frac{4}{3}m+f(k)$, and that graphs of oddness 2 have short cycle covers if they have small 2-factors.  Most of our results a proven by showing that the graphs at hand have cycle double covers which contain 2-regular subgraphs which are close to a 2-factor in size. Finally we discuss some open problems motivated by our current investigation, look at the connection to Jaeger's Petersen-colouring conjecture, and point out a perhaps unexpected rigidity in the structure of shortest covers of some snarks and graphs of low connectivity.

\section{The structure of very short covers}
Our proofs will make use of the following simple observations, neither of  which  are new and have been observed on different occasions by several other authors.

\begin{lemma}\label{lem1}
	If a cubic graph $G$ has a 2-regular subgraph $C$ and a CDC $\mathcal{C}$ such that $C\subset \mathcal{C}$ then $G$ has a $(1,2)$-cycle cover of length $2m-|C|$
\end{lemma}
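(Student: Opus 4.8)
The plan is to obtain the desired $(1,2)$-cover by simply deleting $C$ from the given double cover $\mathcal{C}$. First I would record the only quantitative fact needed: since $\mathcal{C}$ is a CDC, every edge of $G$ is covered exactly twice, so the total length of $\mathcal{C}$, i.e.\ the sum of the lengths of its members, equals $2m$. Set $\mathcal{C}' = \mathcal{C}\setminus\{C\}$, removing from $\mathcal{C}$ the member (or members, if one prefers to regard each component of $C$ as a separate circuit) whose union is the $2$-regular subgraph $C$.

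Next I would verify that $\mathcal{C}'$ is a $(1,2)$-cover by checking the covering multiplicity edge by edge. An edge $e\notin E(C)$ keeps both of its covers in $\mathcal{C}'$, since neither of them comes from $C$; hence such an edge is covered exactly twice. An edge $e\in E(C)$ was covered exactly twice in $\mathcal{C}$, and one of those two covers is $C$ itself; after deleting $C$ it is covered exactly once, and crucially its coverage does not drop to zero, because the second cover survives. Thus every edge of $G$ lies in one or two members of $\mathcal{C}'$, so $\mathcal{C}'$ is indeed a cycle cover, and in fact a $(1,2)$-cover.

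Finally, for the length, deleting $C$ removes exactly $|C|$ edge-appearances from the total, so the length of $\mathcal{C}'$ is $2m-|C|$, as claimed. The argument is elementary and I do not expect any genuine obstacle; the only point requiring care is the observation that removing a member of a \emph{double} cover can never expose an edge, i.e.\ reduce its multiplicity to $0$, which is precisely what guarantees that $\mathcal{C}'$ is still a cover. I would also remark that a $2$-regular $C$ is an even subgraph, so it is a legitimate member of a CDC and the hypothesis $C\subset\mathcal{C}$ is meaningful.
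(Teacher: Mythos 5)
Your proof is correct and is precisely the intended argument: the paper states this lemma without proof (calling it a simple, well-known observation), and the standard justification is exactly your deletion argument --- remove the cycles forming $C$ from the CDC, note each edge of $C$ retains one cover while all other edges retain two, and count $2m-|C|$ edge-appearances. Your added care about treating each component of $C$ as a member of $\mathcal{C}$, and about coverage never dropping to zero, is exactly the right level of rigour.
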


Typically we will be interested in cycle covers with length close to the shortest possible value $\frac{4}{3}m$ and for the very shortest cases we can reverse the conclusion of the preceding lemma.
\begin{lemma}\label{lem2}
	If $G$ has a cycle cover $\mathcal{F}$ of length $\frac{4}{3}m+k$, where $k=0,1$ then the edges of weight 1 form a 2-regular subgraph $C$ of length $n-k$ such 
	that $C \cup \mathcal{F}$ is a cycle double cover of $G$
\end{lemma}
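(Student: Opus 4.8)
The plan is to argue entirely by a weighted degree count at the vertices, using that $G$ is cubic so that $\tfrac{4}{3}m = 2n$ and the cover has total length $2n+k$. For an edge $e$ write $w(e)$ for its \emph{weight}, the number of cycles of $\mathcal{F}$ containing $e$; since $\mathcal{F}$ is a cover, $w(e)\ge 1$ for every $e$, and $\sum_e w(e)=2n+k$. First I would record two facts about the weighted degree $W(v):=\sum_{e\ni v}w(e)$. Because each cycle of $\mathcal{F}$ through $v$ uses exactly two of the three edges at $v$, we have $W(v)=2\cdot(\text{number of cycles of }\mathcal{F}\text{ through }v)$, so $W(v)$ is \emph{even}; and since the three weights at $v$ are each at least $1$ and sum to an even number, $W(v)\ge 4$ (the value $1+1+1=3$ being odd).

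Summing over all vertices gives $\sum_v W(v)=2\sum_e w(e)=4n+2k$, so the total excess $\sum_v\bigl(W(v)-4\bigr)$ equals $2k$. As each term $W(v)-4$ is a nonnegative even integer, for $k=0$ every vertex has $W(v)=4$, forcing the local weight pattern $\{1,1,2\}$ everywhere; the weight-$1$ edges then meet each vertex exactly twice and so form a $2$-regular spanning subgraph (a $2$-factor) with $n=n-k$ edges. For $k=1$ exactly one vertex $v_0$ has $W(v_0)=6$ and every other vertex has $W(v)=4$.

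The hard part is pinning down the local pattern at $v_0$, and this is the only real work in the argument. A priori the three weights at $v_0$ summing to $6$ could be $\{4,1,1\}$, $\{3,2,1\}$ or $\{2,2,2\}$. I would eliminate $\{4,1,1\}$ by noting that an edge of weight $4$ lies in four cycles all passing through $v_0$, whereas $W(v_0)=6$ means only three cycles pass through $v_0$. To eliminate $\{3,2,1\}$ I would switch to a global parity argument: letting $d(v)$ denote the number of weight-$1$ edges at $v$ and $C$ the set of weight-$1$ edges, we have $\sum_v d(v)=2|C|$, which is even; every vertex other than $v_0$ contributes $d(v)=2$, so the pattern $\{3,2,1\}$, giving $d(v_0)=1$, would make this sum odd. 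Hence $v_0$ has pattern $\{2,2,2\}$ with $d(v_0)=0$, so $C$ is $2$-regular, misses exactly $v_0$, and has $n-1=n-k$ edges.

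Finally I would verify that $C\cup\mathcal{F}$ is a cycle double cover. In both cases every edge has weight at most $2$: any edge has an endpoint different from $v_0$, and at such a vertex the pattern is $\{1,1,2\}$, so no edge can have weight exceeding $2$. Adjoining the cycles comprising $C$ to $\mathcal{F}$ raises each weight-$1$ edge to multiplicity $2$ while leaving the weight-$2$ edges untouched, so every edge is covered exactly twice, as required.
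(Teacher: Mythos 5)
Your proof is correct and takes essentially the same approach as the paper: a weighted-degree count showing each vertex weight is even and at least $4$, that the total excess over $4n$ is exactly $2k$, and that the weight-$1$ edges therefore form the required $2$-regular subgraph completing $\mathcal{F}$ to a double cover. The only difference is bookkeeping at the exceptional vertex --- the paper excludes edges of weight $\geq 3$ in one stroke by noting such an edge forces the cover length to be at least $\frac{4}{3}m+2$, while you rule out the patterns $\{4,1,1\}$ and $\{3,2,1\}$ separately via a cycle count and a parity count, which is a valid (if slightly longer) rendering of the same argument.
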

\begin{proof}
	Given a cycle covering $\mathcal{F}$ we define the weight of an edge $e$ to be the number of cycles from $\mathcal{F}$ which contain $e$, and the 
	weight $w(v)$ of a vertex $v$ to be the sum of  the weights of the edges incident to $v$.  The length of $\mathcal{F}$ is then given by $\frac{1}{2}\sum_v w(v)$.
	
	The lowest possible weight of a vertex is 4, which  happens if the incident edges have weights 1,1 and 2.  We now note that in a covering of these lengths all 
	edges must have weight 1 or 2, since $\frac{1}{2}\sum_v w(v)\geq \frac{4}{3}m+2$ if there exists at least one edge of weight 3.
	
	Next we note that the edges of weight 1 must form a 2-regular subgraph $C$ of $G$ with length $n-k$, and C together with $\mathcal{F}$ form a 
	cycle double cover of $G$	
\end{proof}

\section{Short covers in graphs with large circumference}

In \cite{goddynthesis} Goddyn posed what is now known as the strong cycle double cover conjecture: Given a cycle $C$ in a 2-connected cubic graph there is a cycle  double cover  of $G$ which contains $C$.   In \cite{HM,BGHM} this conjecture was verified for snarks on first 32 and then $n\leq 36$ vertices. In \cite{HM} this result was used in order to prove that if the circumference $circ(G)$ of $G$, ie the length of the longest cycle in $G$, is close enough to $n$ then $G$ has a CDC. By following the method of that proof we can also prove that graph with large circumference have short cycle covers.
\begin{theorem}
	There is a function $f(k)$ such that if the strong cycle double cover conjecture is true for cubic graphs with at most $4k$ vertices then for $G$ such that 
	$circ(G)=n-k$ we have 
	 $$scc(G)\leq \frac{4}{3}m+f(k),$$
	 and in general $f(k)\leq 4k$. 
\end{theorem}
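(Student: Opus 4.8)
The plan is to use the longest cycle $C$, of length $n-k$, as the backbone of a cycle double cover and then invoke Lemma~\ref{lem1}. Write $U=V(G)\setminus V(C)$, so that $|U|=k$, and let $W\subseteq V(C)$ be the set of vertices of $C$ adjacent to a vertex of $U$. Since $G$ is cubic the $k$ vertices of $U$ emit at most $3k$ edges, so $|W|\le 3k$ and hence $|U|+|W|\le 4k$. The guiding idea is that everything in $G$ which is ``far'' from $U$ behaves like a Hamiltonian piece: along $C$ the maximal arcs whose internal vertices avoid $W$ carry only chords, and a chord together with an even arc can be disposed of by the usual parity (two-colouring) argument. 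The genuinely obstructing part of the graph is therefore concentrated on the at most $4k$ vertices of $U\cup W$, which is exactly the regime covered by the hypothesis.

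First I would contract each arc of $C$ between consecutive vertices of $W$ to a single edge and resolve the chords lying inside these arcs, producing a cubic graph $H$ on at most $|U|+|W|\le 4k$ vertices together with the image $C_H$ of $C$. Applying the strong cycle double cover conjecture to $H$ and the prescribed cycle $C_H$ is legitimate precisely because $H$ has at most $4k$ vertices, and it yields a cycle double cover $\mathcal{C}_H$ of $H$ with $C_H\in\mathcal{C}_H$.

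Next I would lift $\mathcal{C}_H$ back to $G$ by expanding the contracted arcs and reinserting the chords and the vertices of $U$, extending the cover across each arc so that every edge of $G$ is covered exactly twice and no degenerate $2$-cycle is created. This produces a cycle double cover $\mathcal{C}$ of $G$ containing a $2$-regular subgraph $C^{*}$ that agrees with $C$ except possibly near the at most $4k$ vertices of $U\cup W$, so that $|C^{*}|\ge n-4k$. Lemma~\ref{lem1} applied to $C^{*}\subset\mathcal{C}$ then gives a $(1,2)$-cover of length $2m-|C^{*}|\le 2m-(n-4k)=\tfrac{4}{3}m+4k$, using $n=\tfrac{2}{3}m$; this establishes the theorem with $f(k)\le 4k$.

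The main obstacle is the contraction--lifting correspondence. One must define $H$ so that it is genuinely cubic on at most $4k$ vertices and so that a double cover of $H$ containing $C_H$ pulls back to a double cover of $G$ by \emph{honest} circuits containing a $2$-regular subgraph of size $n-O(k)$. The delicate point is the bookkeeping of the chords and arcs outside $U\cup W$: they must be absorbed into the cover consistently with the cycle $C$ and without merging the two copies of a doubled edge into a digon, and it is exactly this local resolution that pins down the precise form of $f(k)$.
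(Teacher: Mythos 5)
Your reduction to a small graph is the same as the paper's first step: deleting every chord of $C$ and suppressing the resulting degree-2 vertices leaves a cubic graph on exactly $U\cup W$ (chord endpoints are disjoint from $W$, since each vertex of $C$ has only one edge off $C$, so ``resolving the chords inside arcs'' means deleting all chords), hence at most $4k$ vertices, and invoking the strong cycle double cover conjecture for this graph with the cycle inherited from $C$ is precisely what the paper does to obtain its CDC $\mathcal{C}_1$. The genuine gap is in your lifting step. Once the chords have been deleted, the CDC $\mathcal{C}_H$ knows nothing about them: expanding the contracted arcs yields a double cover of $E(G)$ \emph{minus the chords}, and you cannot afterwards route additional cycles through the chords without pushing the multiplicity of edges of $C$ above two, destroying the CDC you need for Lemma \ref{lem1}. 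Your proposed remedy --- ``a chord together with an even arc can be disposed of by the usual parity (two-colouring) argument'' --- does not survive scrutiny: you have no parity control over individual arcs of $C$, and chords nest and interleave, so they cannot be absorbed one at a time. You correctly identify this as the main obstacle, but flagging it is not resolving it; as written, the argument only proves the bound when $C$ has no chords.

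The paper closes exactly this gap with a second, global construction rather than a local one. The cycle $C$ together with \emph{all} its chords is homeomorphic to a cubic hamiltonian graph $G''$; since cubic graphs have even order, the hamiltonian cycle of $G''$ is even, so $G''$ has a proper 3-edge-colouring with colours 1 and 2 alternating on the hamiltonian cycle and colour 3 on the chords, and the three pairs of colour classes form a CDC $\mathcal{C}_2$ of $G''$ containing (the cycle corresponding to) $C$. The two covers are then glued by taking $(\mathcal{C}_1\cup\mathcal{C}_2)\setminus C$: each CDC covers the edges of $C$ once by $C$ itself and once by its other cycles, so discarding both copies of $C$ leaves every edge of $G$ covered exactly twice. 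Note also that the long 2-regular subgraph obtained this way is not, as you assert, a cycle ``agreeing with $C$ except near $U\cup W$''; it is the $\{1,3\}$ colour class of $G''$, i.e.\ all chords together with alternate arcs of $C$, which misses only the internal vertices of the colour-2 arcs (at most $3k$, all lying in $W$) and the $k$ vertices of $U$, whence $|C^{*}|\geq n-4k$ and $scc(G)\leq 2m-(n-4k)=\frac{4}{3}m+4k$ by Lemma \ref{lem1}. Your final arithmetic is correct; what your proposal does not deliver is the existence of the CDC containing that long 2-regular subgraph.
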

\begin{proof}
	 Let $C$ be a longest cycle of $G$ and let $G'$ be the cubic graph which is  homeomorphic to the graph obtained by deleting all chords 
	 of $C$.  If $|C|=n-k$ then $G'$ can have at most $4k$ vertices, which happens  when the vertices not in $C$ form an independent set, 
	 and is 2-connected.  If the strong cycle double cover conjecture is true for cubic graphs  on at most $4k$ vertices then $G'$ has a CDC 
	 $\mathcal{C}_1$ which contains the cycle corresponding to $C$.
	
	The cycle $C$ together with all its chords  is homeomorphic to a hamiltonian cubic graph $G''$ with $C$ giving the hamiltonian cycle.  This graph 
	has a 3-edge-colouring with colour 1 and 2 on the edges on the hamiltonian cycle and colour 3 on the chords.  Each pair of colours define a 
	2-regular graph and those three graphs form a CDC $\mathcal{C}_2$ of $G''$ which contains the cycle corresponding to $C$.  
	
	Now $(\mathcal{C}_1\cup \mathcal{C}_2)\setminus C$ forms a cycle double cover of $G$ which contains the 2-regular graph given by the 
	colours 1 and 3 in $G''$.  This graph has size at least $n-4k$ and the theorem follows by Lemma \ref{lem1}
\end{proof}

\begin{corollary}
	 $f(k)$ exists for $k\leq 9$, and   $f(0)=0$, $f(1)=1$  
\end{corollary}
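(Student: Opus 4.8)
The plan is to handle the three assertions separately. For the existence of $f(k)$ when $k\le 9$, I would check that the hypothesis of the preceding Theorem---that the strong cycle double cover conjecture holds for cubic graphs on at most $4k$ vertices---is actually met. Since $4\cdot 9=36$, this reduces to establishing the strong cycle double cover conjecture for every $2$-connected cubic graph on at most $36$ vertices. The snark case is exactly the computational verification of \cite{HM,BGHM}; the remaining $2$-connected cubic graphs are either $3$-edge-colourable or not cyclically $4$-edge-connected. For the latter I would reduce along a cyclic $3$-edge-cut to strictly smaller cubic graphs and lift the covers back, and the former I would treat directly, so that the conjecture holds across the whole range and the Theorem applies for all $k\le 9$.

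For $f(0)=0$, the condition $circ(G)=n$ says that $G$ is hamiltonian, so colouring the hamiltonian cycle alternately with two colours and its complementary perfect matching with a third gives a proper $3$-edge-colouring; as noted in the introduction this produces a cover of length exactly $\frac{4}{3}m$. Since $\frac{4}{3}m$ is the absolute minimum, $f(0)=0$.

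The substantive case is $f(1)=1$. For the upper bound $f(1)\le 1$, let $G$ be cubic with $circ(G)=n-1$, let $C$ be a longest cycle, and let $v$ be the unique vertex off $C$, with neighbours $a,b,c$ on $C$. By Lemma \ref{lem1} it suffices to produce a cycle double cover of $G$ that contains $C$, since then $scc(G)\le 2m-|C|=2m-(n-1)=\frac{4}{3}m+1$. To build such a cover I would assign weight $w(e)=1$ to every edge of $C$ and $w(e)=2$ to every other edge (the chords of $C$ and the three edges at $v$), and observe that the total weight at each vertex is even: a $C$-vertex carrying a chord has weight $1+1+2=4$, each of $a,b,c$ has weight $1+1+2=4$, and $v$ has weight $2+2+2=6$. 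Hence the multigraph in which each edge $e$ appears with multiplicity $w(e)$ is even and decomposes into circuits $\mathcal{D}$; adding the single copy of $C$ supplied by the cycle itself raises the coverage to $2$ on every edge, so $\{C\}\cup\mathcal{D}$ is the desired cycle double cover.

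The one subtlety is that the circuits in $\mathcal{D}$ must be genuine cycles rather than $2$-gons that traverse a doubled chord twice; I would enforce this by choosing a transition system which, at each endpoint of a doubled edge, pairs the two copies of that edge with distinct edges of $C$ rather than with each other, which is possible exactly because every such endpoint is incident to two edges of $C$. The lower bound $f(1)\ge 1$ is immediate from the Petersen graph, which has $n=10$, $circ=9=n-1$ and $scc=\frac{4}{3}m+1$ as recorded in the introduction. The main obstacle is the sharpness of the upper bound: the general Theorem only guarantees a $2$-regular subgraph of size $n-4k$ and hence $f(1)\le 4$, so the real work lies in the explicit parity-and-transition argument, which keeps all of $C$ in the cover and thereby delivers a $2$-regular subgraph of the full size $n-1$.
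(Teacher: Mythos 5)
Your handling of three of the four assertions matches the paper: $f(0)=0$ because the graph is hamiltonian, $f(1)\ge 1$ from the Petersen graph, and the existence of $f(k)$ for $k\le 9$ from the verification of the strong cycle double cover conjecture on at most $36=4\cdot 9$ vertices in \cite{BGHM} (the paper, like you, leaves the reduction from general $2$-connected cubic graphs to snarks implicit). The genuine gap is in your upper bound $f(1)\le 1$, which is the substantive content of the corollary. The paper does not construct the cycle double cover through the $(n-1)$-cycle $C$ by hand; it invokes Theorem B of \cite{FH}, a nontrivial result which moreover carries a cyclic $4$-edge-connectivity hypothesis. Your replacement argument --- give every edge off $C$ multiplicity $2$, check that all degrees in the resulting multigraph are even, decompose into circuits, and add $C$ --- breaks at the decomposition step, in two ways.

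First, your transition rule cannot be applied at the missing vertex $v$: all three edges at $v$ are doubled and $v$ lies on no edge of $C$, so there are no ``distinct edges of $C$'' with which to pair the parallel copies there (this part is patchable, since at $v$ one can still pair copies of distinct doubled edges). The fatal defect is structural: a transition system yields closed \emph{trails} in which consecutive edge-copies are distinct, not cycles. A trail may use the two copies of a doubled chord non-consecutively, and when you split such a trail at a repeated vertex to extract cycles of the multigraph, the two parallel copies can be brought together and force a digon --- equivalently, a ``cycle'' traversing a chord of $C$ twice, which is not a cycle of $G$ --- and no local transition or parity argument is known to prevent this. This is exactly the open difficulty in this area: were your argument sound, applying it with \emph{every} edge of $G$ doubled would prove the cycle double cover conjecture outright, and as stated it would prove the $(n-1)$-cycle case of Goddyn's strong CDC conjecture with no connectivity assumption at all, strictly strengthening \cite{FH}. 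So the step ``the even multigraph decomposes into genuine cycles of $G$'' is unjustified, and to obtain $f(1)\le 1$ you need to cite Theorem B of \cite{FH} (and combine it with Lemma \ref{lem1}) as the paper does.
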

\begin{proof}
	$f[0)=0$ since in this case the graph is hamiltonian.  By Theorem B of \cite{FH} a cyclically 4-edge-connected cubic graph with an $n-1$-cycle $C$ has a CDC which contains $C$, so $f(1)\leq 1$ by Lemma  \ref{lem1}, and the Petersen graph shows that $f(1)\geq 1$.  In \cite{BGHM} it was verified that  the strong cycle double cover conjecture holds for $n\leq 36$, so $f(k)$ exists for $k\leq 9$.
\end{proof}

\section{Short covers in graphs with Oddness 2}
Recall that the \emph{oddness} $o(G)$ of a cubic graph $G$ is the minimum number of odd cycles in any 2-factor of $G$.  Graphs with oddness 0 are 3-edge-colourable and graphs  with oddness 2 and 4 are known to have cycle double covers \cite{HK,HMcG,H} and also behave well with respect to other cycle cover/decomposition properties \cite{M1}.

For some graphs of oddness 2 we can show an optimal bound on the length of the shortest cycle cover. 
\begin{theorem}
	If $G$ is cyclically 4-edge-connected and has a 2-factor with exactly two components then $scc(g) \leq \frac{4}{3}m+2$.  
	
	If there are three consecutive vertices on one the two cycles in the 2-factor with neighbours on the other cycle then $scc(g) \leq \frac{4}{3}m+1$.
\end{theorem}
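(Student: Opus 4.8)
The plan is to exhibit, for each case, a $(1,2)$-cycle cover of the stated length. By Lemma \ref{lem2} the two target bounds $\tfrac43m+2$ and $\tfrac43m+1$ are exactly what a cover produces when its weight-$1$ edges form a $2$-regular subgraph on $n-2$ (respectively $n-1$) vertices, so the entire task is to build such a cover — equivalently, via Lemma \ref{lem1}, a cycle double cover containing a $2$-regular subgraph of size $n-2$ (respectively $n-1$). First I record that the two cycles of the $2$-factor, call them $C_1$ and $C_2$, are both odd: a $2$-factor with two even components would $3$-edge-colour $G$, and oddness $2$ forces exactly two odd components, here both of the two cycles. Write $M$ for the complementary perfect matching.

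Next I set up a near-$3$-edge-colouring: colour the edges of $C_1$ and $C_2$ alternately with colours $1,2$, and give every matching edge colour $3$. Since $C_1,C_2$ are odd, each picks up exactly one \emph{defect} vertex where two incident cycle edges share a colour. Were this colouring proper, the two even subgraphs $F_{12}$ (colours $1,2$, i.e.\ the whole $2$-factor) and $F_{13}$ (colours $1,3$) would form a $(1,2)$-cover of length $\tfrac43m$ with weight-$1$ set $F_{23}$, exactly as in the $3$-edge-colourable case. The only failure occurs at the two defect vertices, where $F_{13}$ (and symmetrically $F_{23}$) has degree $3$ instead of $2$ and so is not a disjoint union of cycles there.

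The heart of the proof is to repair this defect cheaply. I would join the two defect vertices, one on $C_1$ and one on $C_2$, through the matching edges running between the two cycles; these exist, and in fact there are at least four, since the edge cut separating $V(C_1)$ from $V(C_2)$ consists only of matching edges and $G$ is cyclically $4$-edge-connected. Rerouting $F_{13}$ along such a connection restores $2$-regularity, but a matching edge used on the connection drops to weight $0$ and must be reinstated by a short local cycle; balancing the parity imbalance of the two odd cycles against this recovery is what creates the additive constant. In the general case the two defects are treated separately, costing $+2$. Under the extra hypothesis I would instead place the defect of $C_1$ at the middle vertex $b$ of the three consecutive vertices $a,b,c$ and the defect of $C_2$ at its matching partner $b'$, so the two defects are joined by the single matching edge $bb'$; the two further crossing edges $aa'$ and $cc'$ then supply precisely the room to reroute $F_{13}$ around $b,b'$ and to reinstate $bb'$ with one local cycle assembled from $aa',bb',cc'$ and two arcs of $C_2$, costing only $+1$.

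The step I expect to be the main obstacle is exactly this repair: verifying that after the rerouting every edge still carries weight $1$ or $2$, that no matching edge is left uncovered, and that the net length increase is exactly $2$ (respectively $1$) and no larger. The delicate point is that the arcs of $C_2$ cut out by the endpoints $a',b',c'$ must be absorbed into genuine cycles without over- or under-covering, and it is here that both cyclic $4$-edge-connectivity (guaranteeing enough crossing edges and the absence of bridges) and the consecutiveness of $a,b,c$ are used in an essential way.
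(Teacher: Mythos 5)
Your high-level framing is the right one --- reducing, via Lemma \ref{lem1}, to the construction of a cycle double cover containing a $2$-regular subgraph of size $n-2$ (resp.\ $n-1$) --- and your preliminary observations (both cycles odd by parity, at least four crossing matching edges by cyclic $4$-edge-connectivity, freedom to place the alternation defects at chosen vertices) are all correct. But the proof has a genuine gap at exactly the point you flag as ``the main obstacle'': the repair step is never carried out, and as sketched it cannot work. The ``short local cycle'' that reinstates a dropped matching edge is not short in general: any cycle through a crossing edge $u_1u_2$ must return via another crossing edge and therefore traverses arcs of $C_1$ and/or $C_2$ whose lengths are unbounded (the distance along the cycles between crossing edges is not controlled by any hypothesis, and the girth can be $\geq 5$). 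If you simply add such a cycle to the cover $F_{12}\cup F_{13}'$, its full length enters the count, so the additive cost is the arc length, not $+2$ or $+1$; a quick accounting shows that even in the cleanest variant (defects at both ends of one crossing edge, so that $F_{13}$ minus that edge is a $2$-factor) the patching cycle forces length $\tfrac{4}{3}m+1+|Q|$ where $Q$ is an uncontrolled $u_1$--$u_2$ path. You notice this yourself when you say the arcs of $C_2$ ``must be absorbed into genuine cycles without over- or under-covering,'' but you supply no mechanism for the absorption --- and that mechanism is the entire content of the theorem.

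The paper's proof avoids adding any cycle on top of a cover. It deletes \emph{three} crossing edges $e_1,e_2,e_3$, observes that the two odd cycles then become even cycles $D_1,D_2$ in the reduced cubic graph $G'$, so $G'$ is $3$-edge-colourable with colours $1,2$ on $D_1\cup D_2$ and colour $3$ on the matching; a colour switch on each $D_i$ ensures at least two of the three suppressed endpoints per cycle (all three on one cycle, in the consecutive-vertices case) lie on colour-$1$ edges. It then glues the CDC of $G\setminus\{e_1,e_2,e_3\}$ induced by this colouring with a CDC of the small graph $C_1\cup C_2\cup\{e_1,e_2,e_3\}$, both containing $C_1$ and $C_2$, and discards the two copies of $C_1,C_2$. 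The result is a CDC of $G$ containing the $(1,3)$-coloured $2$-regular subgraph of size at least $n-2$ (resp.\ $n-1$), and Lemma \ref{lem1} gives the bound directly; the long arcs are absorbed inside the double cover, where they cost nothing. Your alternating-colouring-with-defects setup is morally close (the paper's edge deletion is precisely a structured way of cancelling the two parity defects against each other), but without the three-edge deletion and the gluing of two CDCs, the weight bookkeeping you defer does not close, so the argument as written does not establish either bound.
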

There are graphs of this type with $scc(G)=\frac{4}{3}m+1$, e.g the Petersen graph, so the bound in the theorem is at most 1 away from being sharp.  The second case of the theorem will for example apply if one of the two cycles is an induced cycle in $G$, as in the Petersen graph. 
\begin{proof}
	 Let ${C_1,C_2}$ be a 2-factor with exactly two components and let $e_i,i=1,2,3$ be three edges with endpoints on both $C_1$ and $C_2$.  Let 
	 $G'$ be the cubic  graph which is homeomorphic to the graph given by $E(G)\setminus \{e_1,e_2,e_3\}$.
	 
	 Now $G'$ has a 2-factor consisting of two even cycles $D_1$ and $D_2$, corresponding to $C_1$ and $C_2$, and so has a 3-edge-colouring 
	 with colours 1 and 2 on the edges of $D_1$ and $D_2$, and colour 3 on the perfect matching $M$ given by $E(G)\setminus E(D_1\cup D_2)$.  We 
	 may assume that at least two of the three edges on which the endpoints of $e_1,e_2,e_3$ were in $C_1$ have colour 1, if necessary by switching 
	 the colour 1 and 2 on $D_1$, and likewise on $D_2$.
	 
	 The colouring now gives a cycle double covering  of $H'$, by the cycles given by each pair of the three colours, and a corresponding cycle double covering 
	 $\mathcal{C}_1$ of  
	 $E(G)\setminus \{e_1,e_2,e_3\}$ such that $C_1\subset \mathcal{C}_1$ and $C_2 \subset \mathcal{C}'_1$.  Similarly $C_1,C_2$ together with 
	 $e_1,e_2,e_3$ gives a graph with a CDC $\mathcal{C}_2$ such that $C_1\subset \mathcal{C}_2$ and $C_2 \subset \mathcal{C}_2$.  Now  
	 $\mathcal{C}_1\cup \mathcal{C}_2\setminus \{C_1,C_2\}$ is a CDC of $G$ which includes the cycles given by the colours $1,3$ in $G'$, and by our 
	 choice of colouring the length of this 2-regular subgraph is at least $n-2$, so by Lemma \ref{lem1} we have $scc(G)\leq \frac{4}{3}m+2$
	 
	 If the second case of the theorem applies then we can choose the colouring of the cycle in the condition such that all three of those edge endpoints lie on an edge of 
	 colour 1, thereby missing at most one vertex on the other cycle.  
\end{proof}

It is possible to generalize the preceding proof to the situation where the 2-factor has some even components as well, but the bound becomes weaker.
\begin{theorem}
	If $G$ is cyclically 4-edge-connected and has a 2-factor $F$ with exactly two odd components $C_1$ and $C_2$, and the total number of edges in  the three shortest disjoint paths from $C_1$ to $C_2$ in  the multigraph obtained by contracting each cycle in $F$ to a vertex is $d$, then  
	$scc(G)\leq \frac{4}{3}m+2d$.
\end{theorem}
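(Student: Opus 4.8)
The plan is to mirror the construction used for the two-component case, with the three connecting edges $e_1,e_2,e_3$ replaced by the $d$ matching edges lying on the three shortest disjoint paths from $C_1$ to $C_2$ in the contracted multigraph $\tilde G$. First I would contract each cycle of the $2$-factor $F$ to a vertex and observe that, since $G$ is cyclically $4$-edge-connected, every edge cut of $\tilde G$ separating $\tilde C_1$ from $\tilde C_2$ is a cyclic edge cut of $G$ (both sides are nonempty unions of cycles of $F$, hence each contains a cycle), and so has at least four edges. This high edge-connectivity between $\tilde C_1$ and $\tilde C_2$ supplies the three internally disjoint paths; I take three such of smallest total length, write $M_0$ for the set of their $d$ edges, set $G_1=G\setminus M_0$, and let $G'$ be the cubic graph homeomorphic to $G_1$ obtained by suppressing the $2d$ distinct endpoints of the edges of $M_0$.

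The key parity observation is that deleting these matching edges turns $F$ into an all-even $2$-factor of $G'$. Each of the three paths starts at $C_1$, ends at $C_2$, and (the paths being internally disjoint) passes through each internal cycle exactly once, so $C_1$ and $C_2$ each lose exactly three incident matching endpoints, every internal cycle loses exactly two, and every off-path cycle loses none. As $C_1,C_2$ are odd and every other cycle of $F$ is even, all the shortened cycles are even, so $G'$ carries a $2$-factor $F'$ of even cycles and is $3$-edge-colourable. Colouring $F'$ with colours $1,2$ and the remaining perfect matching with colour $3$, the three pairs of colours give a cycle double cover $\mathcal{C}_1$ of $G_1$ that contains every cycle of $F$ (as the colour $1,2$ part) and whose colour $1,3$ part is a $2$-factor of $G'$ spanning all $n-2d$ vertices of $G'$.

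To recover a cover of $G$ I would build a second cycle double cover $\mathcal{C}_2$ of the subgraph $H$ formed by the on-path cycles of $F$ together with the edges of $M_0$, arranged to contain each on-path cycle once. Suppressing degree-$2$ vertices turns $H$ into a cubic multigraph that is a subdivision of a planar ``generalised prism'': the three internally disjoint paths form a theta graph, and blowing up each of its vertices into the corresponding cycle of $F$ (the two branch vertices into triangles, the internal vertices into digons) keeps the graph plane and bridgeless. Its facial cycle double cover then contains every blown-up cycle as a face, the second cover of each being supplied by a neighbouring non-face region, exactly as the two triangles and three quadrilateral faces cover the triangular prism in the two-component argument. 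Deleting the on-path cycles of $F$ from both $\mathcal{C}_1$ and $\mathcal{C}_2$ and taking the union, while leaving the off-path cycles untouched inside $\mathcal{C}_1$, should yield a cycle double cover of $G$: one checks that edges of on-path cycles, of off-path cycles, of $M_0$, and the remaining matching edges are each covered exactly twice. This cover still contains the colour $1,3$ subgraph, whose lift to $G$ is a $2$-regular subgraph spanning at least the $n-2d$ vertices of $G'$, so Lemma~\ref{lem1} gives $scc(G)\le 2m-(n-2d)=\frac{4}{3}m+2d$.

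The main obstacle I expect is the construction and bookkeeping of $\mathcal{C}_2$ together with the differing roles of the two kinds of cycles: off-path cycles must keep their colour $1,2$ copy inside $\mathcal{C}_1$, while on-path cycles are cancelled against $\mathcal{C}_2$, and one must verify that every edge then ends with multiplicity exactly two. A secondary technical point is guaranteeing that the three required internally disjoint paths actually exist (rather than merely edge-disjoint ones) and that the suppression does not create degenerate digons or loops spoiling the $3$-edge-colouring of $G'$; these degenerate cases, where a cycle of $F$ shrinks to length at most $2$, should be excludable from the girth and cyclic $4$-edge-connectivity of $G$, or handled directly.
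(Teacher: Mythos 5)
Your overall strategy is the same as the paper's: the paper's proof is itself only a sketch saying ``proceed as in the previous proof but using the three paths instead of the edges $e_i$'', and most of your fleshing-out is sound --- the parity bookkeeping showing the shortened $2$-factor of $G'$ is all even, the $3$-edge-colouring, the combination of the two double covers, and the final count $scc(G)\le 2m-(n-2d)=\frac{4}{3}m+2d$ via Lemma \ref{lem1} all match what the paper intends. However, there is a genuine gap at the step you dismiss as a ``secondary technical point'': the existence of three \emph{internally vertex-disjoint} paths in the contracted multigraph $\tilde G$ does not follow from cyclic $4$-edge-connectivity. Your cut argument is correct as far as it goes --- every edge cut of $\tilde G$ separating $\tilde C_1$ from $\tilde C_2$ is a cyclic edge cut of $G$ and so has at least four edges --- but Menger's theorem then yields only \emph{edge-disjoint} paths in $\tilde G$. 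Internal vertex-disjointness can fail outright: if every matching edge leaving $C_1$ goes to a single ``hub'' cycle $D$ of $F$ (which is compatible with cyclic $4$-edge-connectivity, since the relevant cuts can all be made large), then $\tilde D$ is a cut vertex of $\tilde G$ separating $\tilde C_1$ from $\tilde C_2$ and at most one internally disjoint path exists. The paper sidesteps this by applying Menger in $G$ itself (cyclically $4$-edge-connected cubic graphs are $3$-connected) to get three vertex-disjoint paths in $G$, and it explicitly allows that ``some cycles in $F$ can intersect more than one of the three paths $P_i$'' --- which is exactly why it abandons the colour-switching refinement of the two-cycle case and settles for the cost of $2d$.

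This matters because your construction of the second double cover $\mathcal{C}_2$ depends essentially on internal disjointness: the facial CDC of the planar ``generalised prism'' requires every internal cycle to be blown up into a digon carrying exactly two attachment points on a single strand. When a cycle of $F$ meets two or three of the paths, it loses $4$ or $6$ matching endpoints (the parity argument survives, since the loss is still even, so $G'$ remains $3$-edge-colourable), but the suppressed union of on-path cycles and $M_0$-edges blows that cycle up into a $2k$-gon whose attachment points may interleave the three strands; the resulting graph need not be the plane theta-blowup you describe, its cycles need not be faces, and the tidy facial CDC containing each on-path cycle is no longer available. So as written your proof establishes the theorem only under the stronger, unwarranted hypothesis that three internally disjoint paths exist in $\tilde G$; to cover the actual statement you need the paper's looser treatment (vertex-disjoint paths in $G$, accepting up to two lost edges of the $(1,3)$-subgraph per path edge) or a separate argument producing a CDC of the path-union subgraph containing each on-path cycle in the shared-cycle configurations.
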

\begin{proof}
	Let $C_1$ and $C_2$ be the two odd cycles in $F$. By Menger's theorem we can find three disjoint paths $P_1,P_2,P_3$ from $C_1$ to $C_2$ in $G$. We can now proceed as in the previous proof but using the three paths instead of the edges $e_i$.  Since some cycles in $F$ can intersect more than one of the three paths $P_i$ we cannot necessarily use the colouring modification used in the previous proof and so each edge of the three paths may contribute 2 edges which are not included in the (1,3)-coloured subgraph, thus adding $2d$ to the length of the covering.
\end{proof}
It is possible to make the bound for $h(d)$ somewhat stronger but since the method does not seem likely to give a sharp result, and quickly becomes lengthy, we have not included that analysis here.

\section{Short Covers and the Perfect Matching Index}\label{pm}

\begin{definition}
	The perfect matching index $\tau(G)$ of a bridgeless cubic graph is the smallest integer $k$ such that there exists perfect matchings $M_1,\ldots,M_k$ such that $\cup_k M_k=E(G)$
\end{definition}
Note that $G$ is 3-edge-colourable if and only if $\tau(G)=3$.  If Fulkerson's conjecture \cite{Fu} is true then $\tau(G)\leq 5$ for all bridgeless cubic graphs $G$. The perfect matching index was studied in more detail in \cite{FV}, where its value for several families of snarks were determined, and in \cite{BGHM} it was shown that all but two of the snarks with $n\leq 36$ vertices have $\tau(G)=4$.

\begin{proposition} \label{simpleprop}
	Let $M_1,M_2$ be two perfect matchings in a cubic graph $G$. Then $M_1\Delta M_2$ induces a subgraph $H$ of $G$ which  consists of disjoint even cycles. 
\end{proposition}	
\begin{proof}
	Since every vertex of $H$ either has degree $0$ or it has degree 2 and one edge belongs to $M_1$ and the other to $M_2$ the result follows. 
\end{proof}

\begin{theorem}
	Let $G$ be a cubic graph. Then $G$ has a 5-CDC where one colour class is a 2-factor if and only if $\tau(G)\leq  4$.
\end{theorem}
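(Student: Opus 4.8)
The plan is to move between the two pictures---a cover of the edge set by perfect matchings and a double cover by even subgraphs---using symmetric differences with a single perfect matching, so that Proposition \ref{simpleprop} supplies the evenness of the resulting colour classes for free. Throughout I regard a $5$-CDC as five even subgraphs $C_1,\dots,C_5$ (each a disjoint union of cycles) with every edge lying in exactly two of them, and I use repeatedly that in a cubic graph a perfect matching and its complementary $2$-factor partition $E(G)$.

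For the implication $\tau(G)\le 4\Rightarrow$ (a $5$-CDC with a $2$-factor class), I start from perfect matchings $M_1,M_2,M_3,M_4$ with $M_1\cup\dots\cup M_4=E(G)$ and record, for each edge $e$, the set $I_e=\{\,i:e\in M_i\,\}$. At any vertex $v$ each colour $i$ meets $v$ exactly once, so the three sets $I_{e_1},I_{e_2},I_{e_3}$ of the edges at $v$ are disjoint, nonempty and cover $[4]$; hence they partition $[4]$ with sizes $(2,1,1)$. In particular exactly one edge at each vertex has $|I_e|=2$, so $M^\ast:=\{e:|I_e|=2\}$ is a perfect matching and $F^\ast:=E(G)\setminus M^\ast$ is a $2$-factor. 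I then set $C_i:=M_i\,\Delta\,M^\ast$ for $i=1,\dots,4$ and $C_5:=F^\ast$; the first four are even by Proposition \ref{simpleprop} and $C_5$ is a $2$-factor. A short local count finishes it: an $F^\ast$-edge $e$ lies in $C_5$ and in the single class $C_i$ with $i\in I_e$, while an $M^\ast$-edge lies in exactly the two classes $C_i$ with $i\notin I_e$; in both cases the edge is covered twice, so $\{C_1,\dots,C_5\}$ is the desired $5$-CDC.

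For the converse I reverse this bridge. Given a $5$-CDC $C_1,\dots,C_5$ with $C_5$ a $2$-factor, let $M:=E(G)\setminus C_5$, a perfect matching, and put $M_i:=C_i\,\Delta\,M$ for $i=1,\dots,4$. The point is that these are perfect matchings covering $E(G)$, which I verify by a vertex-local analysis of the cover: labelling each edge by the pair of classes containing it, the double-cover and evenness conditions force the three labels at every vertex to be $\{5,a\},\{5,b\},\{a,b\}$ for some $a,b$, with the $M$-edge being the one avoiding class $5$. Feeding this local picture into $C_i\,\Delta\,M$ shows each vertex has degree exactly $1$ in every $M_i$, and checking the two edge types shows the $M_i$ cover $E(G)$, giving $\tau(G)\le 4$.

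The main obstacle is the bookkeeping in these two local verifications, and in particular the observation that forces $M^\ast$ (respectively $M$) to be an honest perfect matching: everything hinges on the fact that in $[4]$ a $2$-subset and its complement together account for each colour exactly once, which is precisely what converts the matching-cover multiplicities into the parity needed for even colour classes. Once the $(2,1,1)$ partition at each vertex and the canonical triangle of labels $\{5,a\},\{5,b\},\{a,b\}$ are established, the symmetric-difference identities are routine, and no global or topological input beyond Proposition \ref{simpleprop} is required.
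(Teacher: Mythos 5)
Your proposal is correct and follows essentially the same route as the paper: the forward direction is the paper's pigeonhole argument identifying the perfect matching $M^\ast$ of doubly covered edges and taking $M_i\,\Delta\,M^\ast$ as the even classes, and your converse construction $M_i = C_i\,\Delta\,M$ is literally the paper's $M_i = (F\cap C_i)\cup\bigl((E(G)\setminus F)\setminus C_i\bigr)$ rewritten as a symmetric difference. The only cosmetic differences are that your $(2,1,1)$-partition bookkeeping treats $\tau(G)=3$ and $\tau(G)=4$ uniformly (the paper handles $\tau(G)=3$ separately via the $3$-CDC from a $3$-edge-colouring), and that you spell out the vertex-local verification that the $M_i$ in the converse are perfect matchings, which the paper asserts without detail.
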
\label{tauthm}
\begin{proof}
	If $\tau(G)=3$ then $G$ is 3-edge-colourable and the CDC given by the three pairs of edge colour classes is a 3-CDC in which each colour pair define a 2-factor.
	
	If $\tau(G)=4$ then we let $\mathcal M = \{M_1,M_2,M_3,M_4\}$ be a set of four perfect matchings such that $\bigcup_{i=1}^4 M_i=E(G)$. By the pigeonhole principle, each vertex is incident to 
	exactly one edge which is covered twice by the perfect matchings in $\mathcal M$. Therefore the set of edges covered by exactly two perfect matchings is a perfect matching which we 
	denote by $M$. Now consider $\mathcal A = \{M\Delta M_1, M\Delta M_2, M\Delta M_3, M\Delta M_4\}$. By Proposition \ref{simpleprop} $\mathcal A$ is a set of even subgraphs. If $e\in M$ 
	then $e$ is covered twice by $\mathcal A$ and if $e\not\in M$ then it is covered exactly once by $\mathcal A$. Hence $\mathcal A \cup \{E(G)\setminus M\}$ is a 5-CDC of $G$ in which 
	one colour class is a 2-factor. 
		
	Conversely assume that $G$ has a 5-CDC $\mathcal C = \{C_1,C_2, C_3, C_4, F\}$ where $F$ is a 2-factor of $G$. Then $M_i = (F \cap C_i) \cup ((E(G)\setminus F) \setminus C_i)$ is a 
	perfect matching for $i = 1, \dots, 4$. Let $\mathcal M = \{M_1,M_2,M_3,M_4\}$. If $e\in F$ then e is covered by exactly one perfect matching in $\mathcal M$ and if $e\in E(G)\setminus F$ it is 
	covered by exactly two elements of $\mathcal M$. Hence $\mathcal M$ is a perfect matching cover of $G$ with four perfect matchings. 
\end{proof}

In combination with our earlier results this shows that graphs with $\tau(G)\leq 4$ has cycle covers of the shortest possible length.
\begin{theorem}\label{th:tau}
	If $\tau(G)\leq 4$ then $scc(g)=\frac{4}{3}m$
\end{theorem}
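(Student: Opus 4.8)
The plan is to combine Theorem~\ref{tauthm} with Lemma~\ref{lem1} and then match the resulting upper bound against the trivial lower bound for cubic graphs. First I would invoke Theorem~\ref{tauthm}: since $\tau(G)\le 4$, the graph $G$ admits a 5-CDC $\mathcal{C}$ in which one colour class is a 2-factor $F$. The key observation to record is that a 2-factor of a cubic graph is a 2-regular \emph{spanning} subgraph, so it contains exactly $n$ edges, and moreover $F\subset\mathcal{C}$ by construction.

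Next I would apply Lemma~\ref{lem1} with $C=F$. This immediately yields a $(1,2)$-cycle cover of $G$ of length $2m-|F|=2m-n$. Since $G$ is cubic we have $m=\tfrac{3}{2}n$, equivalently $n=\tfrac{2}{3}m$, and substituting gives $2m-n=2m-\tfrac{2}{3}m=\tfrac{4}{3}m$. Hence $scc(G)\le\tfrac{4}{3}m$.

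For the matching lower bound I would reuse the vertex-weight argument already carried out in the proof of Lemma~\ref{lem2}. In any cycle cover the weight of each vertex is at least $4$, so the total length $\tfrac{1}{2}\sum_v w(v)$ is at least $2n=\tfrac{4}{3}m$; thus $scc(G)\ge\tfrac{4}{3}m$ holds for every bridgeless cubic graph. Combining the two inequalities gives $scc(G)=\tfrac{4}{3}m$, as claimed.

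I do not anticipate a genuine obstacle here: the mathematical content of the statement is entirely carried by Theorem~\ref{tauthm}, and once a 2-factor is exhibited inside a cycle double cover the conclusion reduces to a one-line length computation. The only point requiring a moment's care is the bookkeeping that the 2-factor $F$ has exactly $n$ edges, rather than the size of some arbitrary 2-regular subgraph (which could be smaller), so that the bound delivered by Lemma~\ref{lem1} lands exactly on $\tfrac{4}{3}m$ and not on a strictly larger value.
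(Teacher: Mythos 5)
Your proof is correct and follows the paper's own route exactly: invoke Theorem~\ref{tauthm} to obtain a CDC containing a 2-factor, then apply Lemma~\ref{lem1} to get a cover of length $2m-n=\frac{4}{3}m$. Your only addition is making explicit the lower bound $scc(G)\geq\frac{4}{3}m$ via the vertex-weight count, which the paper leaves implicit but which is needed for the stated equality, so this is a welcome bit of extra care rather than a deviation.
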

\begin{proof}
	By Theorem \ref{tauthm} $G$ has a CDC  which contains a 2-factor and then the theorem follows by Lemma \ref{lem1}.
\end{proof}
Theorem \ref{th:tau} was implicitly present already in the PhD Thesis of Celmins \cite{Cel}, with a different proof and terminology, but as far as we have been able to determine he  did not publish it. That a low perfect matching index implies the existence of a 5-CDC, and thereby a short cycle cover, was recently also shown by Steffen in  \cite{St}.

By this theorem the snarks in several of the classical snark families have short cycle covers. We refer to e.g \cite{FV} for the full definition of these snarks families.
\begin{corollary}
	The Flower snarks, Goldberg Snarks, and Permutation snarks which are not the Petersen graph have $scc(G)=\frac{4}{3}m$
\end{corollary}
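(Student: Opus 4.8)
The plan is to reduce the statement entirely to the perfect matching index via Theorem~\ref{th:tau}: since that theorem gives $scc(G)=\frac{4}{3}m$ whenever $\tau(G)\le 4$, it suffices to verify $\tau(G)\le 4$ for every graph in the three named families. As each of these graphs is a snark, it is not $3$-edge-colourable, so $\tau(G)\ge 4$ automatically, and the real task is to show $\tau(G)=4$, i.e.\ to exhibit four perfect matchings $M_1,\dots,M_4$ whose union is $E(G)$.

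To produce these matchings I would exploit the cyclic structure common to all three families: each is assembled from $n$ isomorphic blocks joined in a ring, so the automorphism group contains a rotation $\rho$ permuting the blocks, and the edge set splits into a small number of $\rho$-orbits. The strategy is to design a few local matching patterns---one choice of matched edges inside a block, together with its connecting edges to the neighbouring blocks---and to combine (possibly rotated copies of) these patterns into four perfect matchings so that every edge orbit is hit at least once. Because there are only finitely many orbits per family, checking that $M_1\cup\cdots\cup M_4=E(G)$ reduces to a finite, block-by-block verification. For the permutation snarks one takes the two defining cycles together with the connecting perfect matching as a skeleton and perturbs it along the even cycles arising from symmetric differences (cf.\ Proposition~\ref{simpleprop}); the Petersen graph must be excluded precisely because it is the unique permutation snark with $\tau=5$, the classical obstruction being that five perfect matchings are required to cover all of its edges.

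The main obstacle I expect is not the underlying idea but the bookkeeping: one must respect the parity constraints (these families are indexed by odd $n$), treat the smallest members separately, and confirm edge-by-edge that no connecting edge between blocks is left uncovered. Since the relevant computations of $\tau$ for exactly the flower, Goldberg, and permutation snarks have already been carried out, the cleanest route is to invoke the explicit values of $\tau$ for these families established in~\cite{FV}---each equal to $4$ away from the Petersen graph---and then apply Theorem~\ref{th:tau} to conclude $scc(G)=\frac{4}{3}m$.
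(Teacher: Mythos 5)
Your proposal is correct and ultimately takes the same route as the paper: cite \cite{FV} for the fact that the flower, Goldberg, and permutation snarks (other than the Petersen graph) have $\tau(G)=4$, and apply Theorem~\ref{th:tau} to conclude $scc(G)=\frac{4}{3}m$. The hand-construction of four perfect matchings you sketch is unnecessary scaffolding---the paper's proof is exactly the one-line citation you end with.
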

\begin{proof}
	In \cite{FV}  it was proven the snarks in each of these families have $\tau(G)=4$.
\end{proof}

As a corollary we also get a partial result on the  conjecture of Zhang \cite{ZCQ} mentioned in the introduction, as already pointed out by Steffen \cite{St}.
\begin{corollary}
	If $\tau(G)\leq 4$ then the shortest cycle cover of $G$ is a $(1,2)$-cover.
\end{corollary}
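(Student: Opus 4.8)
The plan is to combine Theorem~\ref{th:tau} with the structural description of extremal covers in Lemma~\ref{lem2}. By Theorem~\ref{th:tau} the hypothesis $\tau(G)\leq 4$ gives $scc(G)=\frac{4}{3}m$, so \emph{every} shortest cycle cover $\mathcal{F}$ of $G$ has length exactly $\frac{4}{3}m$. The task then reduces to showing that any cycle cover achieving this length is automatically a $(1,2)$-cover, i.e.\ that no edge is covered three or more times.

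First I would invoke Lemma~\ref{lem2} with $k=0$: applied to a shortest cover $\mathcal{F}$, it says that the weight-$1$ edges form a $2$-regular subgraph $C$ with $C\cup\mathcal{F}$ a cycle double cover. The content we actually need is already buried in the proof of that lemma, namely the vertex-weight count. Since each cycle through a vertex contributes $2$ to its weight, every $w(v)$ is even, and with all incident edges of weight at least $1$ the minimum possible value is $w(v)=4$, attained by incident weights $1,1,2$. Hence a cover of length $\frac{1}{2}\sum_v w(v)=\frac{4}{3}m$ forces $w(v)=4$ at every vertex, and the presence of even a single edge of weight $\geq 3$ would push $\frac{1}{2}\sum_v w(v)$ up to at least $\frac{4}{3}m+2$. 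Therefore in a cover of length $\frac{4}{3}m$ every edge has weight $1$ or $2$, which is exactly the definition of a $(1,2)$-cover.

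Because this argument applies to an \emph{arbitrary} shortest cover, the conclusion holds for the shortest cycle cover as claimed. I do not anticipate any genuine obstacle: the statement is essentially a repackaging of the weight inequality already established in Lemma~\ref{lem2}, and the only point requiring care is that one must first pin down the \emph{exact} value $scc(G)=\frac{4}{3}m$ rather than merely an upper bound, so that the weight count is tight enough to exclude weight-$3$ edges. That exact value is supplied by Theorem~\ref{th:tau} together with the standard lower bound $scc(G)\geq\frac{4}{3}m$ for cubic graphs coming from the same $w(v)\geq 4$ estimate.
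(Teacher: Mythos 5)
Your proof is correct. The paper itself gives no explicit proof of this corollary; the intended argument is the constructive one: Lemma~\ref{lem1} applied to the 5-CDC of Theorem~\ref{tauthm} produces a $(1,2)$-cycle cover of length $2m-n=\frac{4}{3}m$, which is shortest by the standard lower bound, so \emph{some} shortest cover is a $(1,2)$-cover --- which is all that Zhang's conjecture asks for. You instead go through the weight count underlying Lemma~\ref{lem2}: once Theorem~\ref{th:tau} pins down $scc(G)=\frac{4}{3}m$, parity of the vertex weights $w(v)$ forces $w(v)=4$ everywhere in any cover of that length, and an edge of weight $3$ or more would raise the total to at least $\frac{4}{3}m+2$. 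Your route buys a strictly stronger conclusion, namely that \emph{every} shortest cycle cover of a graph with $\tau(G)\leq 4$ is a $(1,2)$-cover, not merely that one exists; the paper's constructive route is shorter but only existential. Your counting steps all check out (in a cubic graph each circuit contributes $2$ to $w(v)$, so $w(v)$ is even and at least $4$; an edge of weight $\geq 3$ forces $w\geq 6$ at both endpoints, adding at least $2$ to the length), and you correctly flag that the exact value of $scc(G)$, not just the upper bound, is what makes the count tight.
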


\subsection{Cubic Graphs with lower cyclic connectivity}
Finally we note that the situation for cubic graphs of cyclic edge-connectivity exactly 2, or 3, is not as simple as one might hope. As before we only need to consider graphs which cannot be three-edge-coloured, so the only difference from the snark case is the cyclic connectivity.

There are standard reductions for cutting a cubic graph into two smaller graphs $G_2$ and $G_2$ at a 2-edge or 3-edge cut, adding in extra edges and/or a vertices to make the component graphs cubic.  Ideally one could hope that a shortest covering of $G$ could be built from shortest coverings of $G_1$ and $G_2$. However this is not always the case. In order for this composition construction to work one would need to know that for any edge $e$ in $G_1$ or $G_2$ there is a shortest cover where $e$ has a specified weight, in order to make sure that the cycle covers agree when we compose the two smaller graphs to form $G$. This is true for 3-edge-colourable cubic graphs, and the Petersen graph, but it is not true for one of the two snarks on 18 vertices which has an edge with only one possible weight in all shortest coverings.  
\begin{observation}
	In every 2-factor of the snark $S$, shown in the left part of Figure \ref{fig:sn18}, which can be extended to a cycle double covering there is a cycle which includes the edge $(a,b)$.  Hence the edge $(a,b)$ has weight 1 in every shortest cycle cover of $S$.
\end{observation}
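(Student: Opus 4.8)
The plan is to establish the first, structural sentence — that every $2$-factor of $S$ which extends to a cycle double cover contains $(a,b)$ — and then to read off the weight statement from Lemma \ref{lem2}. For the reduction, recall that \cite{BGHM} computed $scc(S)=\frac{4}{3}m$, so every shortest cycle cover $\mathcal F$ of $S$ falls under the case $k=0$ of Lemma \ref{lem2}: the weight-$1$ edges of $\mathcal F$ form a $2$-factor $C$, and $C\cup\mathcal F$ is a cycle double cover. In other words $C$ is a $2$-factor that extends to a CDC, and $(a,b)$ has weight $1$ in $\mathcal F$ exactly when $(a,b)\in C$. Hence it suffices to prove that every $2$-factor of $S$ extending to a CDC uses the edge $(a,b)$.

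I would prove this contrapositively. Suppose $F$ is a $2$-factor with $(a,b)\notin F$, so that $(a,b)$ belongs to the complementary perfect matching $M=E(S)\setminus F$, and suppose for contradiction that $F\cup\mathcal F$ is a CDC. Then in $\mathcal F$ every edge of $M$ has weight $2$ and every edge of $F$ has weight $1$. Writing the neighbours of $a$ as $b,a_1,a_2$ and those of $b$ as $a,b_1,b_2$, the four edges $aa_1,aa_2,bb_1,bb_2$ lie in $F$ while $ab$ has weight $2$. The cycles of $\mathcal F$ through $a$ use $2+1+1=4$ edge-ends at $a$, so there are exactly two of them, and since $ab$ is traversed twice one of them runs $a_1\!-\!a\!-\!b$ and the other $a_2\!-\!a\!-\!b$; the symmetric count at $b$ then forces two distinct cycles $D_1,D_2\in\mathcal F$, each containing $ab$, with $D_1$ containing the path $a_1\!-\!a\!-\!b\!-\!b_1$ and $D_2$ the path $a_2\!-\!a\!-\!b\!-\!b_2$ after relabelling.

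It then remains to show that $D_1$ and $D_2$ cannot be completed into genuine cycles compatible with $F\cup\mathcal F$ being a CDC. I would trace $D_1$ and $D_2$ outward from these forced paths through the adjacencies in Figure \ref{fig:sn18}, using repeatedly that each further edge of $M$ must still receive weight exactly $2$ and each edge of $F$ weight exactly $1$. Since $S$ is one of the two Blanu\v{s}a snarks, built from two copies of the Petersen graph whose only $2$-factors are pairs of $5$-cycles, I expect this rigidity to determine the continuations almost uniquely, so that a short case analysis — trimmed using the automorphisms of $S$ — shows that no completion closes up correctly, yielding the desired contradiction. Combined with the reduction above, this gives that $(a,b)$ has weight $1$ in every shortest cycle cover of $S$.

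The main obstacle is this last tracing step: turning the forced local picture at $ab$ into a genuine global impossibility without merely invoking an exhaustive machine check over the finitely many $2$-factors of $S$. Leveraging the symmetry of $S$ together with the fixed two-$5$-cycle $2$-factor structure of its Petersen subgraphs is the tool I would rely on to keep that analysis short.
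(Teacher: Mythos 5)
Your proposal follows essentially the same route as the paper: the paper's entire proof is the single remark that the observation ``can be proven by either a case analysis or a direct computer search,'' and your reduction via Lemma \ref{lem2} (using $scc(S)=\frac{4}{3}m$ from the computations of \cite{BGHM}) together with the forced local structure of the two cycles through $(a,b)$ is exactly the correct setup for that case analysis. The unfinished tracing step you flag as the main obstacle is precisely what the paper itself delegates to the case analysis or computer check, so your outline matches --- and in its explicit reduction and local forcing, slightly exceeds --- the level of detail of the paper's own argument.
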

The observation can be proven by either a case analysis or a direct computer search.  

\begin{figure}[t!]\label{fig:sn18}
	\includegraphics[scale=0.7]{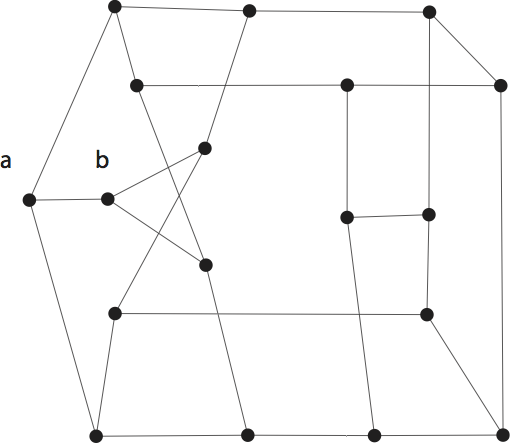}
	\includegraphics[scale=0.3]{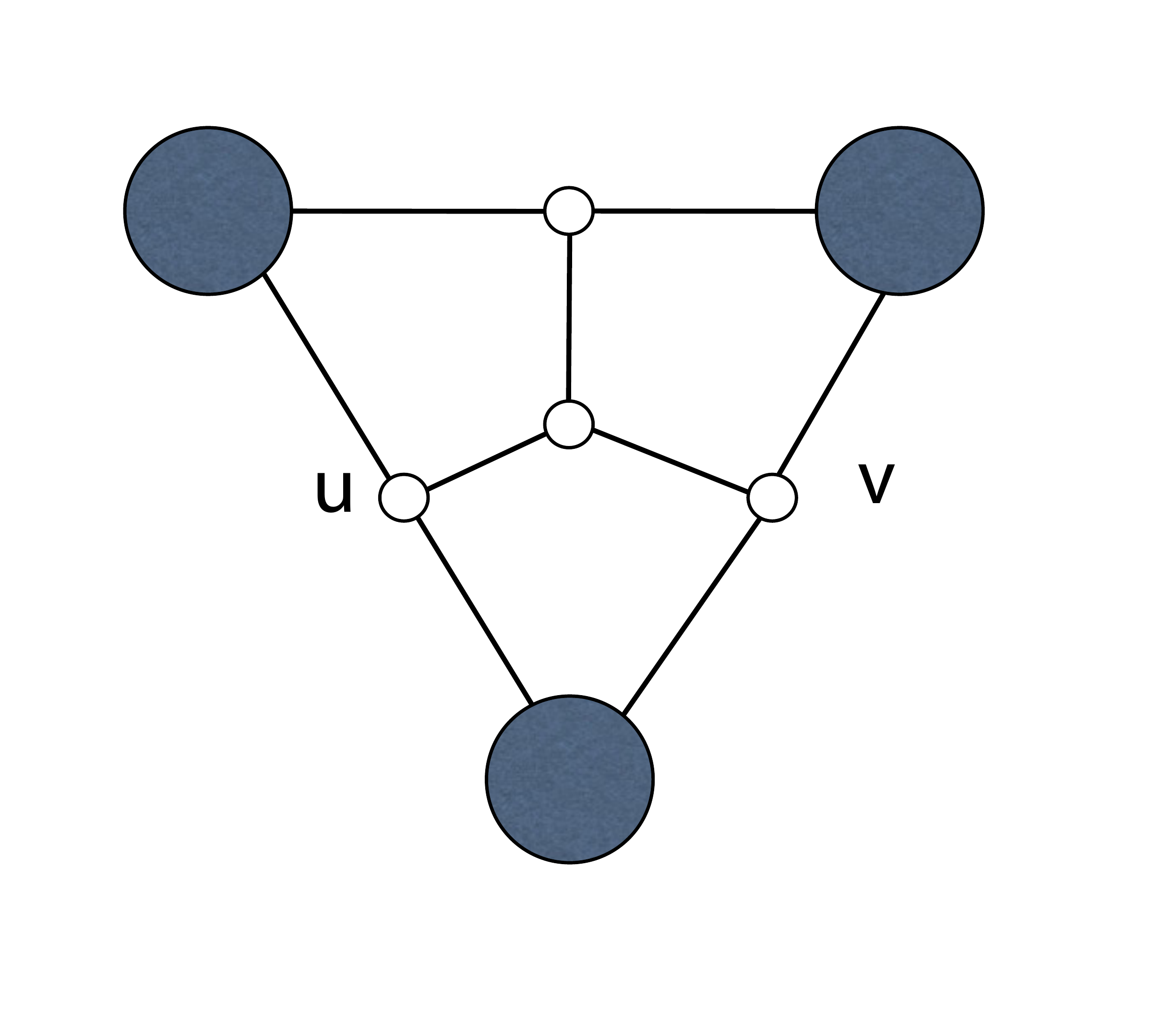}
	\caption{}
\end{figure}

Using this observation we can easily construct graphs where each component coming from a 2-edge cut has shortest cycle cover of length $\frac{4m}{3}$, while the original graph does not. 
\begin{example}\label{ex1}
	Delete the edge $(a,b)$ from three copies of the snark $S$ and then join them up to four additional vertices as shown on the right in Figure \ref{fig:sn18}, where  the 
	dark circles are the copies of $S$, to form a new graph $S_3$. 
	
	If we cut away the bottom copy of $S$, and add the edge $(a,b)$ to this copy and the edge $(u,v)$,  from this graph we are left with one large cubic graph $S_2$ which has a 2-factor 
	which corresponds  to a 2-factor in each of the copies of $S$ which includes the deleted edge $(a,b)$, and so has a cycle cover of length $\frac{4}{3}|E(S_2)|$. However each such 
	cycle cover gives weight 2 to the edge $(u,v)$, which corresponds to the  2-edge cut used to separate the third copy of $S$, and so  cannot be composed with a shortest cycle cover of $S$.  
	Hence $scc(S_3)\geq \frac{4}{3}|E(S_2)|+1$
\end{example}

In  \cite{FV} it was proven that the perfect matching index of a  graph $G$  with a 2-edge cut  is always at least the maximum of the perfect matching index of the  two graphs obtained by cutting $G$ at a 2-edge cut.  From Theorem \ref{th:tau}  and Example \ref{ex1} it  follows that the perfect matching index of a graph can be strictly greater than that of components coming from a 2-edge cut.

\section{Discussion and Problems}
A we have seen snarks with $scc(G)$ larger than $\frac{4}{3}m$ are rare among the small snarks, and the classical snark families, however we expect the minimum to increase for larger snarks.
\begin{conjecture}
	There exist snarks with $n$ vertices such that $scc(G)\geq \frac{4}{3}m + \log n $, for infinitely many $n$.
\end{conjecture}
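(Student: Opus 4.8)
The plan is to measure the inefficiency of a cover by the \emph{defect} $\delta(G)=scc(G)-\tfrac{4}{3}m$, which for a cubic graph equals $scc(G)-2n$ and is always nonnegative. By Lemma \ref{lem2} a defect of $0$ or $1$ forces the weight-$1$ edges to form a near-$2$-factor sitting inside a cycle double cover, so the conjecture is the assertion that this rigid structure cannot persist as $n$ grows: some family of snarks must fail to admit a $2$-regular subgraph of size $n-o(\log n)$ inside any CDC. First I would fix a lower-bound certificate for a single \emph{gadget} snark $S_0$, in the spirit of the Observation and Example \ref{ex1}: a small snark possessing an edge (or small edge set) whose weight is forced in every shortest cover. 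The Observation already supplies such a gadget on $18$ vertices, where the edge $(a,b)$ has weight $1$ in every shortest cover; the content of the conjecture is that many such rigidities can be made to coexist.

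The second step is a \emph{composition} that both accumulates defect and preserves the snark property. The $2$-edge-cut composition of Example \ref{ex1} already shows $scc$ can exceed $\tfrac{4}{3}m$ by a forced $+1$, but it destroys cyclic $4$-edge-connectivity, so those graphs are not snarks. I would instead combine copies of $S_0$ by the dot product (or a similar $4$-edge gluing), which keeps the result cyclically $4$-edge-connected and non-$3$-edge-colourable, and arrange the gadgets along a balanced binary tree of depth $t$, so that $G_t$ has $n=\Theta(2^{t})$ vertices. The target $\log n$ is then exactly the regime in which each doubling of the size is forced to add a constant amount of defect: if one can show $\delta(G_{t})\ge\delta(G_{t-1})+c$ for a fixed $c>0$ while $|V(G_t)|$ at most doubles, then $\delta(G_t)\ge ct=\Omega(\log n)$, and the constant can be tuned by the branching of the tree. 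The reason to expect only logarithmic, rather than linear, growth is precisely the phenomenon observed in \cite{BGHM}: almost all small snarks have defect $0$, so local obstructions are routinely absorbed, and only a surviving \emph{global} obstruction, one per level of the recursion, should persist.

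The hard part is the lower bound itself, since unlike an upper bound it must rule out \emph{every} cycle cover of $G_t$, not exhibit a single good one. Concretely one must prove that the forced weights at the different gadgets are \emph{independent}, in the sense that no global cover can simultaneously relax all of them; a cover is a global object and a locally wasteful assignment at one gadget can in principle be paid for by borrowing weight elsewhere, so the crux is an inductive or entropy-style argument showing that the defects at distinct copies of $S_0$ cannot all be discharged at once. I would try to make this precise through Lemma \ref{lem2}: translate each shortest cover into its weight-$1$ subgraph $C$, show that $C$ must miss at least $c$ vertices inside each level of the tree because of the rigidity of $S_0$ together with the $4$-edge interface, and conclude $|C|\le n-ct$, whereupon Lemma \ref{lem1} converts the structural deficit back into the claimed lower bound on $scc(G_t)$. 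Establishing that the interface forbids the missed vertices at different levels from coinciding or being recovered is, I expect, the main obstacle, and is the reason the statement is offered here as a conjecture rather than a theorem.
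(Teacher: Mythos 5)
You have not proved anything here, and indeed nothing in the paper does either: the statement is offered by the authors as a \emph{conjecture}, with no proof, and your own proposal concedes at the end that its crucial step is unresolved. So the comparison is short: the paper has no argument to match, and yours is a strategy with the load-bearing step missing. The missing step is exactly the inductive defect accumulation $\delta(G_t)\ge\delta(G_{t-1})+c$, and there is concrete evidence in the paper that your proposed route to it fails. You glue copies of the $18$-vertex gadget by dot products to preserve the snark property, but the rigidity certificate you cite (the forced weight of $(a,b)$) is a statement about shortest covers of $S_0$ in isolation; after a $4$-edge gluing the covers of $G_t$ restricted to a gadget are not covers of $S_0$, cycles cross the interface, and the constraint need not survive. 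Worse, Theorem \ref{th:tau} shows that $\tau(G)\le 4$ forces $scc(G)=\frac{4}{3}m$, i.e.\ defect exactly $0$, and \cite{BGHM} found that all but two snarks on $n\le 36$ vertices have $\tau=4$; there is no reason your dot-product tree avoids $\tau=4$, and if it does not, every $G_t$ has defect $0$ and your induction collapses at the first level. The only known construction exceeding defect $1$, from \cite{EM}, achieves defect $2$ precisely by forcing $\tau\ge 5$, which is a global property your local gadget argument does not control.

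A second, independent failure: your plan to ``translate each shortest cover into its weight-$1$ subgraph $C$'' and bound $|C|\le n-ct$ rests on Lemma \ref{lem2}, but that lemma is stated, and is only true, for covers of length $\frac{4}{3}m+k$ with $k\in\{0,1\}$. Once the defect you are trying to rule out is $2$ or more, edges of weight $3$ can appear, the weight-$1$ edges need not form a $2$-regular subgraph, and the correspondence with cycle double covers that your discharging argument relies on is simply unavailable. So even granting independence of the gadget obstructions, the structural bookkeeping you propose does not apply in the regime $\delta\ge 2$ where all the work of the conjecture lies. To be clear, exploring constructions of this kind is a reasonable way to attack the conjecture, but as it stands you have restated the difficulty (that local rigidities must be shown to coexist globally) rather than overcome it, and the specific mechanisms you invoke are contradicted or unsupported by the paper's own results.
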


If we restrict our attention to snarks with bounded oddness it seems reasonable that the cover length should stay close to the minimum possible, as for the snarks of high circumference. 
\begin{conjecture}
	There is a constant $c$ such that $scc(G)\leq \frac{4}{3}m + c $ if $o(G)=2$.
\end{conjecture}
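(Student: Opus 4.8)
The plan is to reduce the statement, via Lemma \ref{lem1}, to the purely structural problem of producing a cycle double cover of $G$ that contains a $2$-regular subgraph missing only a bounded number of vertices. Since $G$ is cubic we have $\frac{4}{3}m = 2n$, so a CDC containing a $2$-regular subgraph $R$ with $|R|\geq n-c$ yields, by Lemma \ref{lem1}, a cover of length $2m-|R|\leq 2n+c = \frac{4}{3}m+c$. Thus it suffices to fix a $2$-factor $F$ with exactly two odd components $C_1,C_2$ (which exists because $o(G)=2$) and to build a CDC whose distinguished $2$-regular subgraph agrees with $F$ except on $O(1)$ vertices.

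First I would start from the construction already used for the $\frac{4}{3}m+2d$ bound: choose three disjoint paths joining $C_1$ to $C_2$, and combine the two auxiliary CDCs so that the $(1,3)$-coloured even subgraph plays the role of $R$. In that argument the only edges excluded from $R$ are those lying on the three connecting paths, and each contributes a bounded amount; the whole difficulty is that the paths may thread through many even components $D_1,\dots,D_t$ of $F$, so that their total length $d$ is unbounded. The heart of a proof must therefore be a way to make the even components traversed by the paths \emph{transparent}: since an even cycle carries no parity obstruction, one should be able to absorb each $D_j$ entirely into $R$ by a local recolouring at the two matching edges where a path enters and leaves $D_j$, so that $D_j$ contributes nothing to the excess. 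If this can be done, the only genuine defects are localised at $C_1$ and $C_2$ themselves, giving a constant $c$ independent of $d$ and of the number of even components.

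The main obstacle is the global consistency of these local recolourings. The colour-switching trick that makes the two-component case cost only $2$ relies on placing two of the three path-endpoints on a common cycle with the same colour, and this is exactly what fails once a single even component is shared by two of the three paths, or is entered and left more than once. One would need to show that the local choices at the successive even components along the three paths can always be reconciled into a single globally valid $3$-edge-colouring of the relevant auxiliary graph, and it is not clear that the accumulated parity constraints along a long chain of even components can be met without paying per component. Equivalently, the parity defect of $C_1$ and $C_2$ is an inherently non-local feature that seems to force connecting the two odd cycles, and there is no a priori constant bound on their distance $d$ in the contracted cycle graph; this non-locality is precisely what keeps the statement open. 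As a first step I would settle the bounded case, establishing $scc(G)\leq\frac{4}{3}m+c$ whenever the two odd cycles lie within bounded distance in the contracted cycle graph (for instance when all even components have bounded length, or when $C_1$ and $C_2$ are joined by a short path), and then attempt the transparency argument to remove this hypothesis.
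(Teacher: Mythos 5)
The statement you are trying to prove is not a theorem of the paper at all: it appears in the concluding section as an open conjecture, and the paper offers no proof of it. The closest the paper comes are its two oddness-2 results: $scc(G)\leq \frac{4}{3}m+2$ when the 2-factor has exactly two components (with an improvement to $\frac{4}{3}m+1$ under an extra condition), and $scc(G)\leq \frac{4}{3}m+2d$ when even components are present, where $d$ is the total length of three disjoint paths between the two odd cycles in the contracted cycle graph. Your reduction via Lemma \ref{lem1} and your starting point are exactly these results, so up to that point you are faithfully reconstructing the paper's partial progress.

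But your proposal contains a genuine gap, and indeed you name it yourself: the ``transparency'' step, in which each even component traversed by the connecting paths is absorbed into the $(1,3)$-coloured 2-regular subgraph at zero cost by local recolourings, is only asserted as something that ``should be possible.'' The obstruction you then describe --- that the colour-switching trick requires two path-endpoints on a common cycle with a common colour, and that this breaks down as soon as an even component meets two of the three paths or is entered and left repeatedly, so that parity constraints may accumulate along a long chain of even components --- is precisely why the paper's method pays $2d$ per unit of path length and why no constant bound is known. Without a mechanism that reconciles all the local recolourings into one globally valid 3-edge-colouring at total cost independent of $d$ and of the number of even components, no constant $c$ emerges; your fallback (the bounded-distance case) is an immediate consequence of the paper's $\frac{4}{3}m+2d$ theorem and adds nothing to it. To your credit, your diagnosis of where and why the argument stalls is accurate: what you have written is a correct account of why this statement is a conjecture, not a proof of it.
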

\begin{problem}
	Is  there are function $h$ such that $scc(G)\leq \frac{4}{3}m+h(o(G))$?
\end{problem}

In \cite{Jae:80} Jaeger made conjecture with far reaching consequences  for the structure of cubic graphs.  A \emph{Petersen}-colouring of a cubic graph $G$ is a map $c:E(G)\rightarrow E(P)$, where $P$ is the Petersen graph, such that if $e_1,e_2,e_3$ are incident edges in $G$ then $c(e_1),c(e_2),c(e_3)$ are incident edges in $P$. Jaeger conjectured that all 2-connected cubic graphs have a Petersen-colouring. This is trivially true for 3-edge-colourable graphs and  in \cite{BGHM} the conjecture was verified for all snarks on $n\leq 36$ vertices.

Given a Petersen-coloring $c$ of $G$ and a cycle cover $\mathcal{F}$ of $P$ we can construct a cycle cover  of $G$ by taking the inverse image under $c$ of each cycle in the cover. Each edge in $P$ makes a contribution to the length of $c^{-1}(\mathcal{F})$ which is equal to its weight in $\mathcal{F}$ times the number of edges mapped onto it by $c$. Using this observation, and saying that $c$ is balanced if the same number of edges in $G$ is mapped onto every edge in $P$, we have:
\begin{theorem}
	If a cubic graph $G$ has a $P$-colouring then $scc(G)\leq\frac{7}{5}m$. If the colouring is not balanced then $scc(G)\leq\frac{7}{5}m-1$ 
\end{theorem}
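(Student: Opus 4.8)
The plan is to pull a shortest cycle cover of the Petersen graph $P$ back through $c$ and then to optimise the choice of cover using the symmetry of $P$. First I would make the reduction behind the quoted observation precise. The defining property of $c$ says that the three edges meeting any vertex $v$ of $G$ are sent bijectively onto the three edges meeting some vertex of $P$. Hence if $C\subseteq E(P)$ is an even subgraph (a disjoint union of circuits, meeting every vertex of $P$ in $0$ or $2$ edges), then $c^{-1}(C)$ meets every vertex of $G$ in $0$ or $2$ edges and so is again an even subgraph. Consequently $c^{-1}(\mathcal F)$ is a cycle cover of $G$ for every cycle cover $\mathcal F$ of $P$, and, writing $a_e=|c^{-1}(e)|$ and $w_{\mathcal F}(e)$ for the weight of $e$ in $\mathcal F$, its length is $\sum_{e\in E(P)}w_{\mathcal F}(e)\,a_e$, where $\sum_{e}a_e=m$ and $c$ is balanced exactly when all $a_e$ equal $m/15$.

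Next I would pin down which covers $\mathcal F$ of $P$ to use. Since $P$ is not $3$-edge-colourable, $scc(P)=\tfrac43\cdot 15+1=21$, and by Lemma \ref{lem2} any cover attaining this length has all weights in $\{1,2\}$, with the weight-$1$ edges forming a $2$-regular subgraph of order $9$, that is, a $9$-cycle $Z$, and the remaining $6$ edges carrying weight $2$. For such a cover the pull-back length equals $\sum_{e\in Z}a_e+2\sum_{e\notin Z}a_e=2m-\sum_{e\in Z}a_e$, so minimising the cover length is the same as choosing a $9$-cycle $Z$ of $P$ carrying as much load $\sum_{e\in Z}a_e$ as possible.

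I would then use that $P$ is edge-transitive, so every edge lies in the same number of $9$-cycles and the average of $\sum_{e\in Z}a_e$ over all $9$-cycles $Z$ is $\tfrac{9}{15}m=\tfrac{3m}{5}$. Thus some $9$-cycle gives $\sum_{e\in Z}a_e\ge\tfrac{3m}{5}$ and hence a cover of length at most $2m-\tfrac{3m}{5}=\tfrac75 m$, which is the first bound; in the balanced case $a_e\equiv m/15$ and every $9$-cycle gives length exactly $\tfrac75 m$. When $c$ is not balanced the loads $a_e$ are not all equal, the maximal $9$-cycle load strictly exceeds the average, and since cover lengths are integers this is meant to yield the saving $scc(G)\le\tfrac75 m-1$.

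The step I expect to be the main obstacle is exactly this last, strict improvement. Plain averaging only shows that the best $9$-cycle beats the average $9$-cycle, whereas the clean integer bound $\tfrac75 m-1$ requires producing, for an arbitrary unequal load vector, a single $9$-cycle whose load exceeds $\tfrac{3m}{5}$ by a full unit, and this has to hold for every residue of $m$ modulo $5$, not only when $5\mid m$ (the only case in which a balanced colouring can even occur). I would attack it by exploiting the transitivity of $\mathrm{Aut}(P)\cong S_5$ on the $9$-cycles of $P$: fix an edge $e^{\ast}$ of maximum load and show, from the abundance of $9$-cycles through a prescribed edge, that one of them has load at least $\tfrac{3m}{5}+1$. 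Turning this structural statement about the $9$-cycles of the Petersen graph into an argument valid for all load vectors is the delicate combinatorial heart of the proof.
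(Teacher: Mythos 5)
Your proposal is essentially the paper's own proof: pull a shortest cycle cover of $P$ back through $c$, note (via Lemma \ref{lem2} applied to $scc(P)=21$) that such a cover has its weight-$1$ edges on a $9$-cycle $Z$, and average the load $\sum_{e\in Z}a_e$ over the $9$-cycles of $P$ using edge-transitivity to obtain a cover of $G$ of length at most $2m-\tfrac{3}{5}m=\tfrac{7}{5}m$. The step you flag as unfinished is precisely the one the paper itself dispatches in a single sentence --- that an unbalanced colouring admits a $9$-cycle carrying strictly more than the average load, used as the weight-$1$ cycle --- so your reconstruction matches the paper in substance, and your observation about residues of $m$ modulo $5$ (one unit above average, with integrality, delivers $\tfrac{7}{5}m-1$ directly only when $5\mid m$; otherwise one needs load at least $\lceil 3m/5\rceil+1$) points at a detail the paper's sketch glosses over as well.
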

The reason for the latter point is that if the colouring is unbalanced we can always find a 9-cycle in $P$ to which more than the average number edges is mapped and use that as the cycle which is covered only once in the shortest cover of $P$. 
\begin{corollary}
	If a cubic graph $G$ has a $P$-colouring and $15 \nmid m$, ie $10 \nmid n$, then  $scc(G)\leq\frac{7}{5}m-1$ 
\end{corollary}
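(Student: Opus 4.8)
The plan is to deduce the corollary directly from the preceding theorem by showing that the divisibility hypothesis forces every $P$-colouring to be unbalanced, so that the stronger bound always applies. First I would observe that a $P$-colouring $c\colon E(G)\to E(P)$ assigns to each edge of $G$ exactly one edge of $P$, so the fibres $c^{-1}(e)$, ranging over $e\in E(P)$, partition $E(G)$. Since the Petersen graph has exactly $15$ edges, this partition has $15$ blocks whose sizes are nonnegative integers summing to $m$.

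Next I would use the definition of \emph{balanced}: the colouring $c$ is balanced precisely when all $15$ of these fibres have the same size. Because the sizes sum to $m$, a common value is possible only if that value equals $m/15$, which is an integer only when $15\mid m$. Consequently, whenever $15\nmid m$, no $P$-colouring of $G$ can be balanced, and hence every $P$-colouring of $G$ satisfies the hypothesis of the unbalanced case of the theorem. Invoking that case immediately gives $scc(G)\le\frac{7}{5}m-1$.

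Finally I would justify the reformulation in terms of $n$. For a cubic graph $m=\tfrac{3n}{2}$, so $15\mid m$ is equivalent to $15\mid\tfrac{3n}{2}$, i.e.\ to $10\mid n$; thus $15\nmid m$ is precisely the condition $10\nmid n$ appearing in the statement, and the two formulations coincide.

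I expect no genuine obstacle here: the entire argument is a counting (pigeonhole) observation feeding into the already-established theorem. The only point requiring any care is the elementary arithmetic equivalence $15\mid m\iff 10\mid n$, which is immediate from $m=3n/2$ once one checks that a balanced colouring is exactly an equal partition of the $m$ edges into the $15$ fibres.
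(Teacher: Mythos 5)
Your proposal is correct and matches the argument the paper intends: the corollary is left without an explicit proof precisely because it follows from the preceding theorem by the observation you make, namely that since $P$ has $15$ edges a balanced colouring forces $15\mid m$, so $15\nmid m$ (equivalently $10\nmid n$, as $m=3n/2$) makes every $P$-colouring unbalanced and the stronger bound applies. Nothing is missing.
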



\newcommand{\etalchar}[1]{$^{#1}$}

\end{document}